\newtheorem{Theorem}{Theorem}[section]
\newtheorem{Lemma}[Theorem]{Lemma}
\newtheorem{Proposition}[Theorem]{Proposition}
\newtheorem{Remark}[Theorem]{Remark}
\numberwithin{equation}{section}
\newcommand{\lc}
{\mathrel{\raise2pt\hbox{${\mathop<\limits_{\raise1pt\hbox
{\mbox{$\sim$}}}}$}}}
\newcommand{\gc}
{\mathrel{\raise2pt\hbox{${\mathop>\limits_{\raise1pt\hbox{\mbox{$\sim$}}}}$}}}
\newcommand{\ec}
{\mathrel{\raise2pt\hbox{${\mathop=\limits_{\raise1pt\hbox{\mbox{$\sim$}}}}$}}}
\begin{document}

\title{Minimal time impulse control of the heat equation}

\author{Yueliang Duan, \thanks{School of
Mathematics and Statistics,
Wuhan University, Wuhan 430072, China;
e-mail: duanyl@csu.edu.cn.}\quad
Lijuan Wang, \thanks{School of
Mathematics and Statistics, Computational Science Hubei Key Laboratory,
Wuhan University, Wuhan 430072, China;
e-mail: ljwang.math@whu.edu.cn. This work was supported by the
National Natural Science Foundation of China under grant 11771344.}
\quad Can Zhang
\thanks{School of Mathematics and Statistics,
Wuhan University, Wuhan 430072, China;
e-mail: canzhang@whu.edu.cn. This work was supported by the
National Natural Science Foundation of China under grant 11501424.}}

\date{}
\maketitle
\begin{abstract}
The paper is concerned with a kind of minimal time control problem for the heat equation with impulse controls.  The purpose of such a problem is to find an optimal
impulse control (among certain control constraint set) steering the solution of the heat equation from a given initial state to a given target set as soon as possible. We will first study the existence and uniqueness of optimal solution for this problem.  In the formulation of this problem, there are two parameters: one is the upper bound of the control constraint and the other one is the moment of impulse time. Then, we will establish the continuity of the minimal time function of this problem with respect to the above mentioned two parameters. Moreover, the convergence of the optimal
control is also discussed.
\end{abstract}

\medskip

\noindent\textbf{2010 Mathematics Subject Classifications.} 49K20, 49J20, 93C20

\medskip

\noindent\textbf{Keywords.} minimal time control, impulse control, bang-bang property

\section{Introduction}

Among the existing literature on the minimal time control problem for the evolution system, the control inputs are usually distributed in the whole time interval, i.e., they may affect the control system  at each instant of time
(see, for instance, \cite{Fattorini}, \cite{Lions},
 \cite{Phung}, \cite{Tucsnak}, \cite{Wang-Xu-Zhang}, \cite{Wang-Yan-0} and \cite{Can Zhang}). We also refer the readers to the minimal time sampled control problem of the heat equation (see, for instance, \cite{Ackermann},
\cite{Chen-Francis},  \cite{Franklin}, \cite{Ichikawa}, \cite{Landau} and \cite{Wang-Yang-Zhang}).
However, in many practical applications, it is much
more convenient to use impulse controls (see, for instance, \cite{Bensoussan},  \cite{Trelat}, \cite{Yang} and
\cite{Yong-Zhang}). In this paper, we will consider a kind of
minimal time  control problem for the heat equation with impulse controls.

Throughout the paper, $\Omega\subseteq \mathbb{R}^d$ $(d\geq 1)$  is a bounded domain with a $C^2$ smooth
boundary $\partial\Omega$; $\omega\subseteq \Omega$ is an open and nonempty subset with its
characteristic function $\chi_\omega$; $\lambda_1$ is the first eigenvalue of $-\Delta$ with
the homogeneous Dirichlet boundary on  $\partial\Omega$; $\{e^{\Delta t}\}_{t\geq 0}$ is the analytic semigroup (on $L^2(\Omega)$)
generated by $\Delta$ with its domain $H^2(\Omega)\cap H_0^1(\Omega)$; $B_r(0)$ denotes the closed ball
in $L^2(\Omega)$, centered at $0$ and of radius $r>0$.

Arbitrarily given an initial state $y_0\in L^2(\Omega)$, we formulate the following two
impulse controlled heat equations at the initial time zero and at a given time $\tau>0$, respectively:
\begin{equation}\label{impulse-1}
\left\{
\begin{array}{lll}
\partial_t y-\Delta y=0&\mbox{in}&\Omega\times (0,+\infty),\\
y=0&\mbox{on}&\partial\Omega\times (0,+\infty),\\
y(0)=y_0+\chi_\omega u&\mbox{in}&\Omega,
\end{array}\right.
\end{equation}
\begin{equation}\label{impulse-2}
\left\{
\begin{array}{lll}
\partial_t y-\Delta y=0&\mbox{in}&\Omega\times (0,+\infty),\\
y=0&\mbox{on}&\partial\Omega\times (0,+\infty),\\
y(0)=y_0,\;\;y(\tau)=y(\tau^-)+\chi_\omega u&\mbox{in}&\Omega,
\end{array}\right.
\end{equation}
where $u\in L^2(\Omega)$ is a control input activated on a subdomain $\omega$ of $\Omega$  and
$$
y(\tau^-)\triangleq\displaystyle{\lim_{t\rightarrow \tau^-}} y(t)\;\;\text{in}\;\;L^2(\Omega).
$$

In the sequel, we always write $y^0(\cdot; y_0, u)$ and $y^\tau(\cdot; y_0, u)$ for the solutions of the equations
(\ref{impulse-1}) and (\ref{impulse-2}), respectively.
It is well known that for each $T>0$, $y^0(\cdot; y_0, u)\in C([0,T];L^2(\Omega))$, and for each $T>\tau$,
$y^\tau(\cdot; y_0, u)\in C([0,\tau); L^2(\Omega))$ and $y^\tau(\cdot; y_0, u)\in C([\tau,T];L^2(\Omega))$.

For each $M>0$, we define a control constraint set $\mathcal{U}_M$ as follows:
\begin{equation*}
\mathcal{U}_M\triangleq \{u\in L^2(\Omega): \|u\|_{L^2(\Omega)}\leq M\}.
\end{equation*}
Given $\tau\geq 0$ and $M>0$, we now consider the minimal time control problem
\begin{equation}\label{impulse-2(1)}
(TP)_M^\tau:\;\;\;\;t^*(M,\tau)\triangleq \inf_{u\in \mathcal{U}_M} \{T\geq \tau: y^\tau(T; y_0, u)\in B_r(0)\}.
\end{equation}
Clearly, the optimal time depends on parameters $M$ and $\tau$. (It can be regarded as a function of these two parameters.)
For this problem $(TP)_M^\tau$, we say that $B_r(0)$ is the target set, and that $u\in \mathcal{U}_M$ is an admissible control if there exists $T\geq \tau$ so that $y^\tau(T; y_0, u)\in B_r(0)$;  we denote by $t^*(M,\tau)$ the optimal time if it exists, and by $u^*\in \mathcal{U}_M$ an optimal control if $y^\tau(t^*(M,\tau); y_0, u^*)\in B_r(0)$. We proved that $(TP)_M^\tau$ has at least one optimal control (see Lemma~\ref{lemma-1}).

We first note that if the relation
\begin{equation}\label{impulse-3}
\big\{e^{\tau \Delta} y_0+\chi_\omega u: u\in \mathcal{U}_M\big\}\cap B_r(0)=\emptyset
\end{equation}
does not hold, then $t^*(M,\tau)=\tau$. In such case, the problem $(TP)_M^\tau$ is trivial.
We then remark that if (\ref{impulse-3}) holds, then it also holds in a small neighborhood of $(M,\tau)$ (see
Lemma~\ref{2018-impulse-10}).

The main objectives of this paper are to address the following three questions under the assumption \eqref{impulse-3}:
\begin{description}
\item[($i$)] Does the problem $(TP)_M^\tau$ has a unique optimal control?

\item[($ii$)] Is the function $t^*(\cdot,\cdot)$ continuous at the point $(M,\tau)$?

\item[$(iii)$] Suppose that $(M_n, \tau_n)\rightarrow (M, \tau)$ as $n\rightarrow +\infty$. Does the optimal control
of the problem $(TP)_{M_n}^{\tau_n}$ converge to that of  the problem $(TP)_M^\tau$?
\end{description}

Our main results hereafter solve these three questions.

\begin{Theorem}\label{impulse-4}
Let $M>0$ and $\tau\geq 0$ satisfy  (\ref{impulse-3}). Then
the problem $(TP)_M^\tau$ has a unique optimal control $u^*$. Moreover,
$t^*(M,\tau)>\tau$ and $\|u^*\|_{L^2(\Omega)}=M$.
\end{Theorem}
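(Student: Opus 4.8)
The plan is to prove the three assertions in the order: $t^*(M,\tau)>\tau$, then $\|u^*\|_{L^2(\Omega)}=M$ for every optimal control (a bang-bang property), and finally uniqueness, which I would deduce from the previous two facts together with the strict convexity of $L^2(\Omega)$. Write $T^*:=t^*(M,\tau)$. For the first claim I argue by contradiction: Lemma~\ref{lemma-1} supplies an optimal control $u^*\in\mathcal{U}_M$, and if $T^*=\tau$ then $y^\tau(\tau;y_0,u^*)=e^{\tau\Delta}y_0+\chi_\omega u^*\in B_r(0)$, which contradicts \eqref{impulse-3}.

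The core of the argument is the bang-bang property, which I would prove by a perturbation argument. Fix any optimal control $u^*$, set $\eta:=e^{\tau\Delta}y_0+\chi_\omega u^*$ and $g(t):=y^\tau(t;y_0,u^*)=e^{(t-\tau)\Delta}\eta$ for $t\ge\tau$. Since $g$ is continuous on $[\tau,+\infty)$ and $T^*>\tau$, minimality of $T^*$ forces $\|g(T^*)\|_{L^2(\Omega)}=r$ — otherwise $g(t)\in B_r(0)$ for some $t\in(\tau,T^*)$, which is impossible — and in particular $\eta\ne0$. Suppose now, for contradiction, that $\|u^*\|_{L^2(\Omega)}<M$, and for $t>\tau$ put $h(t):=\chi_\omega e^{(t-\tau)\Delta}g(t)=\chi_\omega e^{2(t-\tau)\Delta}\eta$; I claim (see the last paragraph) that $h(t)\ne0$, with $h(t)\to h(T^*)\ne0$ in $L^2(\Omega)$ as $t\to T^*$. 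Consider the perturbed control $u^*-s\,h(t)$ for small $s>0$ and $t<T^*$ near $T^*$. Using that $u\mapsto y^\tau(t;y_0,u)$ is affine, that $e^{\sigma\Delta}$ and multiplication by $\chi_\omega$ are self-adjoint, and that $\|e^{\sigma\Delta}\|\le1$, a short computation gives
\begin{align*}
\big\|y^\tau(t;y_0,u^*-s\,h(t))\big\|_{L^2(\Omega)}^2
&=\big\|g(t)-s\,e^{(t-\tau)\Delta}\chi_\omega h(t)\big\|_{L^2(\Omega)}^2\\
&\le \|g(t)\|_{L^2(\Omega)}^2-2s\|h(t)\|_{L^2(\Omega)}^2+s^2\|h(t)\|_{L^2(\Omega)}^2.
\end{align*}
On the other hand, parabolic smoothing makes $g$ analytic on $(\tau,+\infty)$ with $g(t)\in H^2(\Omega)\cap H_0^1(\Omega)$ there, so $\frac{d}{dt}\|g(t)\|_{L^2(\Omega)}^2=-2\|\nabla g(t)\|_{L^2(\Omega)}^2$, which at $t=T^*$ is $\le-2\lambda_1 r^2<0$ by the Poincar\'e inequality, since $\|g(T^*)\|_{L^2(\Omega)}=r$; hence $\|g(t)\|_{L^2(\Omega)}^2\le r^2+C(T^*-t)$ for $t<T^*$ near $T^*$, with $C>0$. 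Choosing first $s>0$ small enough that $-2s\|h(t)\|_{L^2(\Omega)}^2+s^2\|h(t)\|_{L^2(\Omega)}^2<0$ near $T^*$ and $\|u^*\|_{L^2(\Omega)}+s\|h(t)\|_{L^2(\Omega)}<M$ (possible since $\|u^*\|_{L^2(\Omega)}<M$), and then $t<T^*$ with $T^*-t$ small enough, the displayed quantity is $\le r^2$; thus $u^*-s\,h(t)\in\mathcal{U}_M$ drives \eqref{impulse-2} into $B_r(0)$ at a time strictly before $T^*$, contradicting optimality. Hence $\|u^*\|_{L^2(\Omega)}=M$.

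For uniqueness, let $u_1^*,u_2^*$ be optimal controls and put $\bar u:=\tfrac12(u_1^*+u_2^*)$. Since $u\mapsto y^\tau(T^*;y_0,u)$ is affine and $B_r(0)$ is convex, $y^\tau(T^*;y_0,\bar u)=\tfrac12\big(y^\tau(T^*;y_0,u_1^*)+y^\tau(T^*;y_0,u_2^*)\big)\in B_r(0)$, so $\bar u$ is also optimal. By the bang-bang property $\|u_1^*\|_{L^2(\Omega)}=\|u_2^*\|_{L^2(\Omega)}=\|\bar u\|_{L^2(\Omega)}=M$, and the parallelogram law then gives $\|u_1^*-u_2^*\|_{L^2(\Omega)}^2=2M^2+2M^2-4M^2=0$, i.e. $u_1^*=u_2^*$.

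The step I expect to be the main obstacle is the claim $h(t)\ne0$ used in the perturbation argument: it asserts that $e^{2(t-\tau)\Delta}\eta$ does not vanish on the open set $\omega$, which is precisely the property making the impulse control act in a norm-decreasing direction. I would establish it as a preliminary lemma from the spatial real-analyticity of $e^{s\Delta}\eta$ on the connected domain $\Omega$ for $s>0$, together with the injectivity of $e^{s\Delta}$ (so that $\eta\ne0$ rules out $e^{2(t-\tau)\Delta}\eta\equiv0$). The only other place requiring some care is the strict negativity of $\frac{d}{dt}\|g(t)\|_{L^2(\Omega)}^2$ at $T^*$, which is exactly where $\|g(T^*)\|_{L^2(\Omega)}=r>0$ enters.
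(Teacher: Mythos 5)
Your proof is correct, and the way you obtain $\|u^*\|_{L^2(\Omega)}=M$ is genuinely different from the paper's. The paper first shows that $\mathcal{A}_{t^*}\cap B_r(0)$ is a singleton (via strict convexity of $L^2(\Omega)$ and continuity of the trajectory at $t^*$), deduces $\mathcal{A}_{t^*}\cap \mathrm{int}\,B_r(0)=\emptyset$, and then separates these two convex sets by a Hahn--Banach functional $\varphi_0$; the resulting maximum condition, combined with $\chi_\omega e^{(t^*-\tau)\Delta}\varphi_0\neq0$ (unique continuation), gives the explicit formula $u^*=M\chi_\omega e^{(t^*-\tau)\Delta}\varphi_0/\|\chi_\omega e^{(t^*-\tau)\Delta}\varphi_0\|_{L^2(\Omega)}$ and hence $\|u^*\|_{L^2(\Omega)}=M$. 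You instead run a direct perturbation argument: assuming $\|u^*\|_{L^2(\Omega)}<M$, you perturb in the direction $-\chi_\omega e^{(t-\tau)\Delta}g(t)$, use self-adjointness of $e^{\sigma\Delta}$ and $\chi_\omega$ to make the first-order term equal to $-2s\|h(t)\|^2_{L^2(\Omega)}$, and combine $\|g(t)\|_{L^2(\Omega)}^2\le r^2+C(T^*-t)$ with $\|h(t)\|_{L^2(\Omega)}^2\ge c_0>0$ near $T^*$ (the latter again by unique continuation/analyticity, exactly the input the paper takes from \cite{Lin}) to reach the target strictly before $T^*$. Both routes hinge on the same unique-continuation fact; the paper's separation argument buys the adjoint-state characterization of $u^*$ recorded in Remark~\ref{impulse-7} essentially for free, while yours dispenses with the Hahn--Banach step and with the singleton property of $\mathcal{A}_{t^*}\cap B_r(0)$ altogether. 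One small remark: the strict negativity of $\frac{d}{dt}\|g(t)\|_{L^2(\Omega)}^2$ at $T^*$, which you flag as a delicate point, is not actually needed --- the bound $\|g(t)\|_{L^2(\Omega)}^2\le r^2+C(T^*-t)$ only requires that this derivative be \emph{bounded} near $T^*$, which parabolic smoothing already provides. The remaining parts of your argument ($t^*(M,\tau)>\tau$ by contradiction with \eqref{impulse-3}, and uniqueness via convexity of the target plus the parallelogram law) coincide with the paper's Steps~1 and~3.
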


\begin{Theorem}\label{impulse-5}
Let $M>0$ and $\tau\geq 0$ satisfy (\ref{impulse-3}).
Then the function $t^*(\cdot,\cdot)$ is continuous at the point $(M,\tau)$.
Furthermore, suppose that $(M_n,\tau_n)\rightarrow (M,\tau)$ as $n\rightarrow +\infty$, and that
$u_n^*$ and $u^*$ are the optimal controls for the problems $(TP)_{M_n}^{\tau_n}$
and $(TP)_M^\tau$, respectively. Then
\begin{equation*}\label{impulse-6}
u_n^*\rightarrow u^*\;\;\mbox{in}\;\;L^2(\Omega) \;\;\text{as}\;\;n\rightarrow +\infty.
\end{equation*}
\end{Theorem}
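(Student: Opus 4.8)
The plan is to obtain the continuity of $t^*(\cdot,\cdot)$ at $(M,\tau)$ by proving upper and lower semicontinuity separately, and then to deduce $u_n^*\to u^*$ from weak compactness together with the uniqueness in Theorem~\ref{impulse-4}. Throughout I use the representation $y^{\tau'}(T;y_0,v)=e^{\Delta T}y_0+e^{\Delta(T-\tau')}\chi_\omega v$ valid for $T\ge\tau'\ge0$, the strong continuity and self-adjointness of $\{e^{\Delta t}\}_{t\ge0}$ on $L^2(\Omega)$, and the decay bound $\|e^{\Delta t}z\|_{L^2(\Omega)}\le e^{-\lambda_1 t}\|z\|_{L^2(\Omega)}$. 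By Lemma~\ref{2018-impulse-10}, \eqref{impulse-3} persists at $(M_n,\tau_n)$ for all large $n$, so Theorem~\ref{impulse-4} gives a unique optimal control $u_n^*$ for $(TP)_{M_n}^{\tau_n}$ with $\|u_n^*\|_{L^2(\Omega)}=M_n$ and $t_n:=t^*(M_n,\tau_n)>\tau_n$; likewise $\|u^*\|_{L^2(\Omega)}=M$ and $t^*:=t^*(M,\tau)>\tau$.

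\emph{Upper semicontinuity.} Fix $\varepsilon>0$ and put $u_n:=\min\{1,M_n/M\}u^*\in\mathcal U_{M_n}$, so $u_n\to u^*$ in $L^2(\Omega)$. Since $\tau_n\to\tau<t^*+\varepsilon$, strong continuity gives $y^{\tau_n}(t^*+\varepsilon;y_0,u_n)\to y^\tau(t^*+\varepsilon;y_0,u^*)=e^{\Delta\varepsilon}y^\tau(t^*;y_0,u^*)$ in $L^2(\Omega)$. Because $\|y^\tau(t^*;y_0,u^*)\|_{L^2(\Omega)}\le r$, the decay bound yields $\|y^\tau(t^*+\varepsilon;y_0,u^*)\|_{L^2(\Omega)}\le e^{-\lambda_1\varepsilon}r<r$, hence $\|y^{\tau_n}(t^*+\varepsilon;y_0,u_n)\|_{L^2(\Omega)}\le r$ for all large $n$, i.e. $u_n$ is admissible for $(TP)_{M_n}^{\tau_n}$ and $t_n\le t^*+\varepsilon$. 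Letting $\varepsilon\downarrow0$ gives $\limsup_n t_n\le t^*$.

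\emph{Lower semicontinuity.} Suppose to the contrary that $\liminf_n t_n<t^*$; pass to a subsequence with $t_n\to\bar t<t^*$ and, using boundedness of $\{u_n^*\}$, a further subsequence with $u_n^*\rightharpoonup\bar u$ weakly in $L^2(\Omega)$, so $\bar u\in\mathcal U_M$ and $\bar t\ge\tau$. For $\phi\in L^2(\Omega)$, writing $\langle e^{\Delta(t_n-\tau_n)}\chi_\omega u_n^*,\phi\rangle=\langle\chi_\omega u_n^*,e^{\Delta(t_n-\tau_n)}\phi\rangle$ and using $e^{\Delta(t_n-\tau_n)}\phi\to e^{\Delta(\bar t-\tau)}\phi$ strongly together with $u_n^*\rightharpoonup\bar u$, one finds $y^{\tau_n}(t_n;y_0,u_n^*)\rightharpoonup y^\tau(\bar t;y_0,\bar u)$ weakly in $L^2(\Omega)$, whence $\|y^\tau(\bar t;y_0,\bar u)\|_{L^2(\Omega)}\le\liminf_n\|y^{\tau_n}(t_n;y_0,u_n^*)\|_{L^2(\Omega)}\le r$. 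If $\bar t>\tau$, then $\bar u$ is admissible for $(TP)_M^\tau$ and drives the state into $B_r(0)$ at time $\bar t<t^*$, contradicting the optimality of $t^*$. If $\bar t=\tau$, then $e^{\Delta\tau}y_0+\chi_\omega\bar u=y^\tau(\tau;y_0,\bar u)\in B_r(0)$ with $\bar u\in\mathcal U_M$, contradicting \eqref{impulse-3}. Therefore $\liminf_n t_n\ge t^*$, and with the previous step $t_n\to t^*$. I expect this step, and in particular the degenerate case $\bar t=\tau$, to be the crux of the proof: the parabolic smoothing disappears in the limit, and it is precisely assumption \eqref{impulse-3} — stable under perturbation by Lemma~\ref{2018-impulse-10} — that prevents the optimal times from collapsing onto $\tau$.

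\emph{Convergence of optimal controls.} Since $\|u_n^*\|_{L^2(\Omega)}=M_n\to M=\|u^*\|_{L^2(\Omega)}$, in the Hilbert space $L^2(\Omega)$ it suffices to prove $u_n^*\rightharpoonup u^*$ weakly, and as $\{u_n^*\}$ is bounded it is enough to show every weak limit point equals $u^*$. If $u_{n_k}^*\rightharpoonup\bar u$ weakly, then $\bar u\in\mathcal U_M$; moreover $t_{n_k}\to t^*>\tau$, so $t_{n_k}-\tau_{n_k}\to t^*-\tau>0$, and the same test-function computation as above gives $y^{\tau_{n_k}}(t_{n_k};y_0,u_{n_k}^*)\rightharpoonup y^\tau(t^*;y_0,\bar u)$, hence $\|y^\tau(t^*;y_0,\bar u)\|_{L^2(\Omega)}\le r$. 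Thus $\bar u$ is an optimal control for $(TP)_M^\tau$, so $\bar u=u^*$ by the uniqueness in Theorem~\ref{impulse-4}. Consequently $u_n^*\rightharpoonup u^*$ weakly while $\|u_n^*\|_{L^2(\Omega)}\to\|u^*\|_{L^2(\Omega)}$, which forces $u_n^*\to u^*$ strongly in $L^2(\Omega)$.
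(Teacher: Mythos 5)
Your proof is correct, but it reaches the joint continuity of $t^*(\cdot,\cdot)$ by a genuinely different and more direct route than the paper. The paper first proves separate continuity in each variable --- continuity of $t^*(M,\cdot)$ at $\tau$ (Proposition~\ref{impulse-9}) and continuity plus \emph{strict monotonicity} of $t^*(\cdot,\tau)$ near $M$ (Proposition~\ref{impulse-10}) --- and then assembles joint continuity by sandwiching $t^*(\widetilde M,\widetilde\tau)$ between $t^*(M+\delta_1,\widetilde\tau)$ and $t^*(M-\delta_1,\widetilde\tau)$ via the monotonicity in the first variable; the introduction explicitly flags this two-variable assembly as the main difficulty. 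You instead run a single $\limsup$/$\liminf$ argument along an arbitrary sequence $(M_n,\tau_n)\to(M,\tau)$: for the upper bound you use the rescaled control $\min\{1,M_n/M\}u^*$ together with the strict decay $\|e^{\Delta\varepsilon}z\|\le e^{-\lambda_1\varepsilon}\|z\|$ to show $u^*$'s scaled copies remain admissible at time $t^*+\varepsilon$, and for the lower bound you use weak compactness, the weak convergence of the states (correctly handled by moving the semigroup onto the test function, where $t_n-\tau_n\to\bar t-\tau\ge 0$ suffices), and --- crucially --- the perturbed nontriviality condition of Lemma~\ref{2018-impulse-10} to rule out the degenerate limit $\bar t=\tau$. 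This bypasses Propositions~\ref{impulse-9} and \ref{impulse-10} and the monotonicity step \eqref{proof-18} entirely, at the cost of losing the structural by-product (strict decrease of $t^*$ in $M$) that the paper's route establishes along the way. Your final step, deducing strong convergence $u_n^*\to u^*$ from weak convergence to the unique optimal control of Theorem~\ref{impulse-4} plus convergence of norms $M_n\to M$, coincides with the paper's argument.
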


\begin{Remark}
Theorems \ref{impulse-4} and \ref{impulse-5} could be easily extended to a class of parabolic equations
$\partial_t y-\textrm{div}(A(\cdot)\nabla y)=0$ with either  homogenous Dirichlet or Neumann boundary conditions,
where $A(\cdot)$ is a symmetric matrix-valued function in $\Omega$ satisfying the uniform ellipticity and Lipschitz
conditions.
\end{Remark}

The minimal time impulse control problem of the heat equation  has not been touched upon till now. In the problem $(TP)_M^\tau$, the optimal time and
time optimal control are two of the most important quantities. In most papers concerning minimal time
control problems, people can  provide  necessary conditions for optimal controls, i.e.,
Pontryagin's maximum principle (see, for instance, \cite{Barbu}, \cite{Kunisch-1} and \cite{Li-Yong}). In some specific situations, people can also
give characteristics for the optimal time, as well as the optimal control for a minimal time control problem
(see, for instance, \cite{Kunisch-2}, \cite{Wang-Zuazua}
and \cite{Wang-Yan}).  We refer the reader to Remark \ref{impulse-7} for a characteristic for the optimal control
of the problem $(TP)_M^\tau$.

The dependence of the minimal time function  with respect to the
initial data has been analyzed in some earlier works (see, for instance, \cite{Bardi}, \cite{Carja} and \cite{Seidman}).
However, to the best of our knowledge, the continuity of the minimal time function $t^*(\cdot,\cdot)$ for the problem $(TP)_M^\tau$
(as a function of the upper bound $M$ of control constraint
and the impulse time $\tau$) is new. The difficulty is to obtain the continuity of $t^*(\cdot,\cdot)$ with respect to both two variables.
To over this difficulty, we need not only the continuity of $t^*(\cdot,\cdot)$ for each variable, but also
 the monotone property of $t^*(\cdot,\cdot)$
for the first variable.

Note however that obtaining the rates of continuity and convergence
is of interest, but  this issue is challenging and completely open.

The rest of the paper is organized as follows. Section \ref{s2} is devoted to the proof of Theorem~\ref{impulse-4},
while Theorem \ref{impulse-5} is proved in Section \ref{s3}.

\section{Proof of Theorem~\ref{impulse-4}}\label{s2}

We start with the existence of optimal controls to the problem $(TP)_M^\tau$ for any $M>0$ and any
$\tau\geq 0$.

\begin{Lemma}\label{lemma-1}
Let $M>0$ and $\tau\geq 0$. Then
the problem $(TP)_M^\tau$ has at lease one optimal control.
\end{Lemma}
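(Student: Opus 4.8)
The plan is to run the classical direct method: show first that the minimal time is finite, then pick a minimizing sequence of admissible pairs (time, control), extract a weak limit of the controls, and verify that this weak limit is still admissible at the minimal time.

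First I would check feasibility, i.e. that $t^*(M,\tau)<+\infty$, so that the infimum in \eqref{impulse-2(1)} is taken over a nonempty set (it is clearly bounded below by $\tau$). For this it suffices to use $u\equiv 0\in\mathcal{U}_M$: then $y^\tau(T;y_0,0)=e^{\Delta T}y_0$ for every $T\geq\tau$, and since $\|e^{\Delta T}\|_{\mathcal{L}(L^2(\Omega))}\leq e^{-\lambda_1 T}$, we get $\|y^\tau(T;y_0,0)\|_{L^2(\Omega)}\leq e^{-\lambda_1 T}\|y_0\|_{L^2(\Omega)}\to 0$ as $T\to+\infty$; hence $y^\tau(T;y_0,0)\in B_r(0)$ for all large $T$, and therefore $t^*(M,\tau)\in[\tau,+\infty)$.

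Next, by the definition of the infimum I would choose $T_n\to t^*(M,\tau)$ with $T_n\geq\tau$ and controls $u_n\in\mathcal{U}_M$ such that $y^\tau(T_n;y_0,u_n)\in B_r(0)$. Since $\mathcal{U}_M$ is a bounded, closed and convex subset of the Hilbert space $L^2(\Omega)$, it is weakly sequentially compact, so along a subsequence (not relabeled) $u_n\rightharpoonup u^*$ weakly in $L^2(\Omega)$ with $u^*\in\mathcal{U}_M$. It then remains to show that $y^\tau(t^*(M,\tau);y_0,u^*)\in B_r(0)$, which — because $B_r(0)$ is convex and closed, hence weakly closed — will follow once I prove that
\begin{equation*}
y^\tau(T_n;y_0,u_n)\rightharpoonup y^\tau\big(t^*(M,\tau);y_0,u^*\big)\qquad\text{weakly in }L^2(\Omega).
\end{equation*}
Using the representation $y^\tau(T;y_0,u)=e^{\Delta T}y_0+e^{\Delta(T-\tau)}\chi_\omega u$, valid for $T\geq\tau$, the term $e^{\Delta T_n}y_0\to e^{\Delta t^*(M,\tau)}y_0$ strongly by the strong continuity of the semigroup; for the remaining term I would test against an arbitrary $\phi\in L^2(\Omega)$, use the self-adjointness of $e^{\Delta t}$ to write $\langle e^{\Delta(T_n-\tau)}\chi_\omega u_n,\phi\rangle=\langle \chi_\omega u_n, e^{\Delta(T_n-\tau)}\phi\rangle$, and split $e^{\Delta(T_n-\tau)}\phi=e^{\Delta(t^*(M,\tau)-\tau)}\phi+\rho_n$ with $\|\rho_n\|_{L^2(\Omega)}\to 0$ (again strong continuity, since $T_n-\tau\to t^*(M,\tau)-\tau\geq 0$); the first part converges to $\langle \chi_\omega u^*, e^{\Delta(t^*(M,\tau)-\tau)}\phi\rangle$ by the weak convergence of $u_n$, while the $\rho_n$-part is bounded by $M\|\rho_n\|_{L^2(\Omega)}\to 0$. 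This gives the desired weak convergence, and hence $u^*$ is an optimal control.

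The main obstacle is precisely this last limit passage, in which the linear operator (through its time argument $T_n-\tau$) and the input $\chi_\omega u_n$ vary simultaneously, with only strong convergence available for the former and weak convergence for the latter; the adjoint/test-function manipulation above is what makes it go through, and it has the advantage of covering the borderline case $t^*(M,\tau)=\tau$ as well. (When $t^*(M,\tau)>\tau$ one could alternatively invoke the compactness of $e^{\Delta s}$ for $s>0$ to get strong convergence of the second term, but this is not needed.) Everything else — finiteness of the minimal time, weak compactness of $\mathcal{U}_M$, weak closedness of $B_r(0)$, and the explicit solution formula for \eqref{impulse-2} — is routine.
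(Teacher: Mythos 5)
Your proposal is correct and follows essentially the same route as the paper: verify that $u=0$ is admissible via the decay estimate $\|e^{\Delta T}y_0\|_{L^2(\Omega)}\leq e^{-\lambda_1 T}\|y_0\|_{L^2(\Omega)}$, take a minimizing sequence, extract a weak limit in $\mathcal{U}_M$, and pass to the limit in the representation $y^\tau(T_n;y_0,u_n)=e^{\Delta T_n}y_0+e^{\Delta(T_n-\tau)}\chi_\omega u_n$ to land in the weakly closed ball $B_r(0)$. Your explicit adjoint/test-function treatment of the mixed strong--weak limit is a detail the paper leaves implicit, but it is the same argument.
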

\begin{proof}~We first show that
\begin{equation}\label{Proof-1}
0\;\;\mbox{is an admissible control to the problem}\;\;(TP)_M^\tau.
\end{equation}

Indeed, for each $T>0$, since $y^\tau(T; y_0, 0)=e^{\Delta T} y_0$,
by a standard energy estimate, we have that
\begin{equation}\label{Proof-2}
\|y^\tau(T; y_0, 0)\|_{L^2(\Omega)}\leq e^{-\lambda_1 T}\|y_0\|_{L^2(\Omega)}.
\end{equation}
From (\ref{Proof-2}) it follows that
\begin{equation}\label{Proof-2(1)}
\|y^\tau(T; y_0, 0)\|_{L^2(\Omega)}\leq r\;\;\mbox{when}\;\;
T=\frac{1}{\lambda_1}\mbox{ln}\frac{\|y_0\|_{L^2(\Omega)}}{r},
\end{equation}
which indicates (\ref{Proof-1}).\\

We next claim that
\begin{equation}\label{Proof-3}
(TP)_M^\tau\;\;\mbox{has at least one optimal control}.
\end{equation}

For this purpose, according to (\ref{Proof-1}) and (\ref{impulse-2(1)}), there exist  sequences
$\{T_n\}_{n\geq 1}\subseteq [\tau,+\infty)$  and
$\{u_n\}_{n\geq 1}\subseteq \mathcal{U}_M$ so that
\begin{equation}\label{Proof-4}
T_n\rightarrow t^*(M,\tau)\in [\tau,+\infty)\;\;\mbox{and}\;\;\{y^\tau(T_n; y_0, u_n)\}_{n\geq 1}\subseteq B_r(0).
\end{equation}
On one hand, since $\{u_n\}_{n\geq 1}\subseteq \mathcal{U}_M$, there exists a control
$\widetilde{u}\in \mathcal{U}_M$ and a subsequence of $\{u_n\}_{n\geq 1}$, still denoted in the same manner,
so that
\begin{equation}\label{Proof-5}
u_n\rightarrow \widetilde{u}\;\;\mbox{weakly in}\;\;L^2(\Omega).
\end{equation}
On the other hand, noting that
\begin{equation}\label{Proof-6}
y^\tau(T_n; y_0, u_n)=e^{\Delta T_n} y_0+e^{\Delta (T_n-\tau)}\chi_\omega u_n,
\end{equation}
by the first relation of (\ref{Proof-4}) and (\ref{Proof-5}), we can take the limit for $n\rightarrow +\infty$
in (\ref{Proof-6}) to obtain that
\begin{equation*}
y^\tau(T_n; y_0, u_n)\rightarrow y^\tau(t^*(M,\tau); y_0, \widetilde{u})\;\;
\mbox{weakly in}\;\;L^2(\Omega).
\end{equation*}
This, along with the second relation of (\ref{Proof-4}), implies that
\begin{equation*}
y^\tau(t^*(M,\tau); y_0, \widetilde{u})\in B_r(0).
\end{equation*}
Hence, $\widetilde{u}$ is an optimal control
to $(TP)_M^\tau$, i.e., (\ref{Proof-3}) follows.\\

In summary, we finish the proof of this lemma.
\end{proof}

We now turn to the proof of Theorem~\ref{impulse-4}.

\begin{proof}[Proof of Theorem~\ref{impulse-4}]
Let $M>0$ and $\tau\geq 0$ verify (\ref{impulse-3}).
According to Lemma~\ref{lemma-1} and (\ref{impulse-3}), we see that
$t^*(M,\tau)>\tau$ and $(TP)_M^\tau$ has an optimal control
$u^*$. For simplicity we set $t^*\triangleq t^*(M,\tau)$ and define
\begin{equation*}
\mathcal{A}_{t^*}\triangleq \big\{e^{t^*\Delta} y_0+e^{(t^*-\tau)\Delta}\chi_\omega u:
 u\in \mathcal{U}_M\big\}.
\end{equation*}
Obviously, $\mathcal{A}_{t^*}$ is a convex and closed subset of $L^2(\Omega)$.
The rest of the proof will be carried out by three steps as follows.\\

Step 1. We show that
\begin{equation}\label{Proof-8}
\mathcal{A}_{t^*}\cap B_r(0)=\big\{y^\tau(t^*; y_0, u^*)\big\}.
\end{equation}

Indeed, since $y^\tau(t^*; y_0, u^*)\in \mathcal{A}_{t^*}\cap B_r(0)$,
it suffices to show that $\mathcal{A}_{t^*}\cap B_r(0)$ has a unique element.
To seek a contradiction,
we would suppose that $\mathcal A_{t^*}\cap B_r(0)$ contains another
element (which is different from $y^\tau(t^*; y_0, u^*)$), denoted by
\begin{equation}\label{Proof-9}
y^\tau(t^*; y_0, \widehat{u})=
e^{t^*\Delta}y_0+e^{(t^*-\tau)\Delta}(\chi_\omega\widehat{u})\in B_r(0)
\end{equation}
with some $\widehat{u}\in\mathcal U_M$.
Set $v\triangleq (u^*+\widehat{u})/2$. It is clear that
\begin{equation}\label{Proof-10}
\|v\|_{L^2(\Omega)}\leq M\;\;\mbox{and}\;\;
y^\tau(t^*; y_0, v)=[y^\tau(t^*; y_0, u^*)+y^\tau(t^*; y_0, \widehat{u})]/2.
\end{equation}
Since $L^2(\Omega)$ is strictly convex, by (\ref{Proof-9}) and
the equality in (\ref{Proof-10}), we have that
\begin{equation*}
\|y^\tau(t^*; y_0, v)\|_{L^2(\Omega)}<r.
\end{equation*}
Noting that $t^*>\tau$,
we see that $y^\tau(\cdot; y_0, v)$ is continuous at the time $t^*$.
Hence, there exists $t_0\in (\tau,t^*)$ so that $\|y^\tau(t_0; y_0, v)\|_{L^2(\Omega)}\leq r$.
This, together with the inequality in (\ref{Proof-10}), leads to a contradiction with
the time optimality of $t^*$ for the problem $(TP)_M^{\tau}$. Thus, (\ref{Proof-8})
is verified.\\

Step 2. We claim that
\begin{equation}\label{Proof-10(1)}
\|u^*\|_{L^2(\Omega)}=M.
\end{equation}

For this purpose, since $t^*>\tau$, $y^\tau(\cdot; y_0, u^*)$ is continuous at the time $t^*$.
This implies that $$y^\tau(t^*; y_0, u^*)\in \mathcal{A}_{t^*}\cap \partial B_r(0),$$
which, combined with (\ref{Proof-8}), indicates that $\mathcal{A}_{t^*}\cap \text{int}B_r(0)=\emptyset$.
Here, $\text{int}B_r(0)$ denotes the interior of $B_r(0)$ in $L^2(\Omega)$.
Hence, according to the geometric version of the Hahn-Banach Theorem (see, for instance, \cite{Brezis}),
there exist $\varphi_0\in L^2(\Omega)$ with $\varphi_0\not=0$ and a constant $c$ so that
\begin{equation}\label{Proof-11}
\langle \varphi_0, z_1\rangle_{L^2(\Omega)}\geq c\geq
\langle \varphi_0, z_2\rangle_{L^2(\Omega)}\;\;\mbox{for all}\;\;
z_1\in B_r(0)\;\;\mbox{and}\;\;z_2\in \mathcal A_{t^*}.
\end{equation}
It follows from (\ref{Proof-11}) and (\ref{Proof-8}) that
\begin{equation}\label{Proof-12}
c=\langle \varphi_0, y^\tau(t^*; y_0, u^*)\rangle_{L^2(\Omega)}\;\;
\mbox{and}\;\;\langle \varphi_0, z_2-y^\tau(t^*; y_0, u^*)\rangle_{L^2(\Omega)}\leq 0\;\;
\mbox{for all}\;\;z_2\in \mathcal{A}_{t^*}.
\end{equation}
The inequality in (\ref{Proof-12}) yields that
\begin{equation*}
\langle u-u^*, \chi_\omega e^{(t^*-\tau)\Delta}\varphi_0\rangle_{L^2(\Omega)}\leq 0
\;\;\text{for all}\;\;u\in \mathcal{U}_M.
\end{equation*}
Hence,
\begin{equation}\label{Proof-13}
\max_{\|u\|_{L^2(\Omega)}\leq M}\langle u,\chi_\omega e^{(t^*-\tau)\Delta}\varphi_0\rangle_{L^2(\Omega)}
=\langle u^*,\chi_\omega e^{(t^*-\tau)\Delta}\varphi_0\rangle_{L^2(\Omega)}.
\end{equation}
Since $\varphi_0\neq 0$ and $t^*>\tau$, by the strong unique continuation property of the
heat equation (see, for instance, \cite{Lin}), we obtain that
$\chi_\omega e^{(t^*-\tau)\Delta}\varphi_0\neq 0$.
This, along with (\ref{Proof-13}), implies that
\begin{equation}\label{Proof-14}
u^*=M\frac{\chi_\omega e^{(t^*-\tau)\Delta}\varphi_0}{\|\chi_\omega e^{(t^*-\tau)\Delta}\varphi_0\|_{L^2(\Omega)}},
\end{equation}
which indicates (\ref{Proof-10(1)}).\\

Step 3. End of the proof.

Suppose that $v^*$ is also an optimal control to $(TP)_M^\tau$.
 It is clear that
$(u^*+v^*)/2$ is an optimal control to $(TP)_M^\tau$. According to (\ref{Proof-10(1)}),
\begin{equation*}
\|u^*\|_{L^2(\Omega)}=\|v^*\|_{L^2(\Omega)}=\|(u^*+v^*)/2\|_{L^2(\Omega)}=M.
\end{equation*}
These, together with the parallelogram rule, yield that
\begin{equation*}
\|u^*-v^*\|_{L^2(\Omega)}^2=2\big(\|u^*\|_{L^2(\Omega)}^2
+\|v^*\|^2_{L^2(\Omega)}\big)-\|u^*+v^*\|_{L^2(\Omega)}^2=0.
\end{equation*}
Hence, $u^*=v^*$.\\

In summary, we finish the proof of Theorem~\ref{impulse-4}.
\end{proof}

\begin{Remark}\label{impulse-7}
Moreover, we can give a characterization of the vector $\varphi_0$ in the proof of
Theorem~\ref{impulse-4}, and thus obtain a characterization of the optimal control to the problem $(TP)^\tau_M$.

In fact, we  see from (\ref{Proof-11}) and
the equality in (\ref{Proof-12}) that
\begin{equation}\label{impulse-8}
\langle \varphi_0, z-y^\tau(t^*; y_0, u^*)\rangle _{L^2(\Omega)}\geq 0\;\;\text{for all}\;\; z\in B_r(0).
\end{equation}
Since $y^\tau(t^*; y_0, u^*)\in B_r(0)$, it follows from (\ref{impulse-8}) that
\begin{equation*}
\langle \varphi_0, y^\tau(t^*; y_0, u^*)\rangle_{L^2(\Omega)}=\min_{z\in B_r(0)}
\langle \varphi_0, z\rangle_{L^2(\Omega)}=-r\|\varphi_0\|_{L^2(\Omega)}.
\end{equation*}
This yields that
\begin{equation*}
\varphi_0=c y^\tau(t^*; y_0, u^*)\;\;\text{for some constant}\;\;c<0,
\end{equation*}
which, combined with (\ref{Proof-14}), indicates that
\begin{equation*}
u^*=-M\frac{\chi_\omega e^{(t^*-\tau)\Delta}y^\tau(t^*; y_0, u^*)}
{\|\chi_\omega e^{(t^*-\tau)\Delta}y^\tau(t^*; y_0, u^*)\|_{L^2(\Omega)}}.
\end{equation*}
Thus, the unique optimal control $u^*$ to the problem $(TP)_M^{\tau}$
can be characterized by the following two-point boundary problem:
\begin{equation*}
\left\{
\begin{array}{lll}
\partial_t y-\Delta y=0&\text{in}&\Omega\times \big((0,\tau)\cup(\tau,t^*)\big),\\
y=0&\text{on}&\partial\Omega\times \big((0,\tau)\cup(\tau,t^*)\big),\\
y(0)=y_0,\;\;y(\tau)=y(\tau^-)+\chi_\omega u^*&\text{in}&\Omega,\\
\partial_t \varphi+\Delta \varphi=0&\text{in}&\Omega\times (0, t^*),\\
\varphi=0&\text{on}&\partial\Omega\times(0, t^*),\\
\varphi(t^*)=-y(t^*)&\text{in}&\Omega
\end{array}
\right.
\end{equation*}
with
\begin{equation*}
u^*=M\frac{\chi_\omega \varphi(\tau)}{\|\chi_\omega \varphi(\tau)\|_{L^2(\Omega)}}.
\end{equation*}
(Here, when $\tau=0$, $y(0)\triangleq y_0+\chi_\omega u^*$.)
\end{Remark}

\section{Proof of Theorem~\ref{impulse-5}}\label{s3}

In order to show the proof of Theorem~\ref{impulse-5}, we need some preparations.
\begin{Lemma}\label{2018-impulse-10} Let $M>0$ and $\tau\geq 0$ verify (\ref{impulse-3}).
Then there is a positive constant $\varepsilon_0\in (0, M)$
so that
\begin{equation}\label{proof-15}
\left\{e^{\widetilde{\tau}\Delta} y_0+\chi_\omega u:
\widetilde{\tau}\in [\max\{0, \tau-\varepsilon_0\},\tau+\varepsilon_0],
u\in \mathcal{U}_{M+\varepsilon_0}\right\}\cap B_r(0)=\emptyset.
\end{equation}
\end{Lemma}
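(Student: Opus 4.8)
The plan is to argue by contradiction, exploiting the weak sequential compactness of balls in $L^2(\Omega)$, the strong continuity of the semigroup $\{e^{t\Delta}\}_{t\geq0}$, and the fact that the closed convex set $B_r(0)$ is weakly closed. Suppose the conclusion fails. Then, applying the negation with $\varepsilon_0$ replaced by $1/n$ for each $n\geq1$, there exist $\widetilde{\tau}_n\in[\max\{0,\tau-1/n\},\tau+1/n]$ and $u_n\in\mathcal{U}_{M+1/n}$ such that $e^{\widetilde{\tau}_n\Delta}y_0+\chi_\omega u_n\in B_r(0)$. From $\widetilde{\tau}_n\geq\max\{0,\tau-1/n\}\geq\tau-1/n$ and $\widetilde{\tau}_n\leq\tau+1/n$ we get $|\widetilde{\tau}_n-\tau|\leq 1/n$, so $\widetilde{\tau}_n\to\tau$; in particular the case $\tau=0$ needs no separate treatment, since then $\widetilde{\tau}_n\in[0,1/n]\to 0$ and continuity of the semigroup at $0$ is available from the right.

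Next I would extract a limiting control. Since $\|u_n\|_{L^2(\Omega)}\leq M+1/n$, the sequence $\{u_n\}_{n\geq1}$ is bounded in $L^2(\Omega)$, so, passing to a subsequence still denoted $\{u_n\}_{n\geq1}$, there is $\widetilde{u}\in L^2(\Omega)$ with $u_n\rightharpoonup\widetilde{u}$ weakly in $L^2(\Omega)$. By the weak lower semicontinuity of the norm,
\[
\|\widetilde{u}\|_{L^2(\Omega)}\leq\liminf_{n\to+\infty}\|u_n\|_{L^2(\Omega)}\leq M,
\]
hence $\widetilde{u}\in\mathcal{U}_M$.

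Then I would pass to the limit in $e^{\widetilde{\tau}_n\Delta}y_0+\chi_\omega u_n$. Because $\widetilde{\tau}_n\to\tau$ and $t\mapsto e^{t\Delta}y_0$ is continuous on $[0,+\infty)$, we have $e^{\widetilde{\tau}_n\Delta}y_0\to e^{\tau\Delta}y_0$ strongly in $L^2(\Omega)$; combining this with $\chi_\omega u_n\rightharpoonup\chi_\omega\widetilde{u}$ weakly in $L^2(\Omega)$ gives
\[
e^{\widetilde{\tau}_n\Delta}y_0+\chi_\omega u_n\;\rightharpoonup\;e^{\tau\Delta}y_0+\chi_\omega\widetilde{u}\quad\text{weakly in }L^2(\Omega).
\]
Since $B_r(0)$ is closed and convex, it is weakly closed, so $e^{\tau\Delta}y_0+\chi_\omega\widetilde{u}\in B_r(0)$. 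Together with $\widetilde{u}\in\mathcal{U}_M$ this yields $\big(e^{\tau\Delta}y_0+\chi_\omega\widetilde{u}\big)\in\{e^{\tau\Delta}y_0+\chi_\omega u:u\in\mathcal{U}_M\}\cap B_r(0)$, contradicting (\ref{impulse-3}). Thus some $\varepsilon_0>0$ with the stated property exists; if this $\varepsilon_0\geq M$, shrink it so that $\varepsilon_0\in(0,M)$.

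The boundedness and weak compactness are routine; the only point that needs a little care — and the main (mild) obstacle — is the mixed convergence step, where one must combine the \emph{strong} convergence of the free term $e^{\widetilde{\tau}_n\Delta}y_0$ with the merely \emph{weak} convergence of the control term to identify the weak limit of the sum, and then invoke weak closedness (rather than norm closedness) of $B_r(0)$.
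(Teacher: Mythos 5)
Your argument is correct and is essentially the same as the paper's: both proceed by contradiction, extract a weakly convergent subsequence of controls with limit in $\mathcal{U}_M$ via weak lower semicontinuity of the norm, use strong continuity of $t\mapsto e^{t\Delta}y_0$ to pass to the limit in the sum, and conclude that the weak limit lies in $B_r(0)$ (the paper phrases this via weak lower semicontinuity of the norm rather than weak closedness of the ball, which is the same fact), contradicting (\ref{impulse-3}). The only cosmetic difference is that the paper takes the vanishing parameter to be $M/(2n)$ so that it lies in $(0,M)$ automatically, whereas you use $1/n$ and shrink at the end.
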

\begin{proof}~By contradiction, there would exist two sequences $\{u_n\}_{n\geq 1}$ and $\{\tau_n\}_{n\geq 1}$ with
\begin{equation}\label{proof-16}
\|u_n\|_{L^2(\Omega)}\leq M+M/(2n)\;\;\mbox{and}\;\;
\tau_n\in [\max\{0, \tau-M/(2n)\},\tau+M/(2n)]
\end{equation}
so that
\begin{equation}\label{proof-17}
\|e^{\tau_n \Delta}y_0+\chi_\omega u_n\|_{L^2(\Omega)}\leq r\;\;\mbox{for all}\;\;n\geq 1.
\end{equation}
According to the first inequality of (\ref{proof-16}), there is a subsequence of $\{u_n\}_{n\geq 1}$, still
denoted in the same way, and a control $\widetilde{u}$ with $\|\widetilde{u}\|_{L^2(\Omega)}\leq M$, so that
\begin{equation*}
u_n\rightarrow \widetilde{u}\;\;\mbox{weakly in}\;\;L^2(\Omega).
\end{equation*}
This, along with the second inequality of (\ref{proof-16}), implies that
\begin{equation*}
e^{\tau_n \Delta} y_0+\chi_\omega u_n\rightarrow e^{\tau \Delta} y_0+\chi_\omega \widetilde{u}
\;\;\mbox{weakly in}\;\;L^2(\Omega),
\end{equation*}
which, combined with (\ref{proof-17}), indicates that
\begin{equation*}
\|e^{\tau \Delta} y_0+\chi_\omega \widetilde{u}\|_{L^2(\Omega)}\leq r.
\end{equation*}
This leads to a contradiction with (\ref{impulse-3}).
(Here, we used the fact that $\widetilde{u}\in \mathcal{U}_M$.)\\

Hence, we finish the proof of this lemma.
\end{proof}

The second result is concerned with the continuity of the function $t^*(M,\cdot)$, where
$M>0$ is fixed.
\begin{Proposition}\label{impulse-9} Let $M>0$ and $\tau\geq 0$ satisfy (\ref{impulse-3}). Then the function $t^*(M,\cdot)$ is continuous at the time $\tau$.
\end{Proposition}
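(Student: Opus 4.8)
**Proof proposal for Proposition 3.3 (continuity of $t^*(M,\cdot)$ at $\tau$).**

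The plan is to show one-sided semicontinuity in both directions, i.e. $\limsup_{\widetilde\tau\to\tau} t^*(M,\widetilde\tau)\le t^*(M,\tau)$ and $\liminf_{\widetilde\tau\to\tau} t^*(M,\widetilde\tau)\ge t^*(M,\tau)$, over the range $\widetilde\tau\in[\max\{0,\tau-\varepsilon_0\},\tau+\varepsilon_0]$ furnished by Lemma~\ref{2018-impulse-10}, so that each $(M,\widetilde\tau)$ in this range still satisfies \eqref{impulse-3} and hence (by Theorem~\ref{impulse-4}) has a unique optimal control with $L^2$-norm exactly $M$. Write $t^*\triangleq t^*(M,\tau)$ and let $u^*$ be the optimal control for $(TP)_M^\tau$, so $e^{t^*\Delta}y_0+e^{(t^*-\tau)\Delta}\chi_\omega u^*\in B_r(0)$. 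Throughout I would use the representation $y^{\widetilde\tau}(T;y_0,u)=e^{T\Delta}y_0+e^{(T-\widetilde\tau)\Delta}\chi_\omega u$ for $T\ge\widetilde\tau$ together with the contraction/analyticity estimates $\|e^{s\Delta}\|_{\mathcal L(L^2)}\le e^{-\lambda_1 s}\le 1$ and the strong continuity $s\mapsto e^{s\Delta}\varphi$.

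For the upper bound, fix $\delta>0$ and take the control $u^*$ (which lies in $\mathcal U_M$ regardless of $\widetilde\tau$). For $\widetilde\tau$ close to $\tau$ I want to steer into $B_r(0)$ at a time close to $t^*$. Since $y^\tau(t^*;y_0,u^*)\in B_r(0)$ and the trajectory is continuous at $t^*$ (as $t^*>\tau$), either $y^\tau(t^*;y_0,u^*)\in\operatorname{int}B_r(0)$, in which case a small perturbation argument immediately gives $t^*(M,\widetilde\tau)<t^*+\delta$ for $\widetilde\tau$ near $\tau$; or — the generic case by Theorem~\ref{impulse-4} — $\|y^\tau(t^*;y_0,u^*)\|_{L^2}=r$. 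In the latter case I note that by \eqref{Proof-2}-type dissipation, $\|y^\tau(t^*+\delta;y_0,u^*)\|_{L^2}\le e^{-\lambda_1\delta}r<r$, so $u^*$ steers $(TP)_M^\tau$ strictly inside $B_r(0)$ at time $t^*+\delta$; now compare $y^{\widetilde\tau}(t^*+\delta;y_0,u^*)=e^{(t^*+\delta)\Delta}y_0+e^{(t^*+\delta-\widetilde\tau)\Delta}\chi_\omega u^*$ with $y^\tau(t^*+\delta;y_0,u^*)$. Their difference is $(e^{(t^*+\delta-\widetilde\tau)\Delta}-e^{(t^*+\delta-\tau)\Delta})\chi_\omega u^*$, whose $L^2$ norm tends to $0$ as $\widetilde\tau\to\tau$ by strong continuity of the semigroup applied to the fixed vector $\chi_\omega u^*$ (using $t^*+\delta-\widetilde\tau\ge \delta/2>0$ for $\widetilde\tau$ near $\tau$, so we stay in the strongly-continuous regime). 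Hence for $\widetilde\tau$ sufficiently close to $\tau$, $\|y^{\widetilde\tau}(t^*+\delta;y_0,u^*)\|_{L^2}\le r$, i.e. $u^*$ is admissible for $(TP)_M^{\widetilde\tau}$ and $t^*(M,\widetilde\tau)\le t^*+\delta$. Letting $\delta\to0$ gives the $\limsup$ inequality.

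For the lower bound, argue by contradiction: suppose there are $\widetilde\tau_n\to\tau$ with $t^*(M,\widetilde\tau_n)\to L<t^*$ (note $t^*(M,\widetilde\tau_n)\ge\widetilde\tau_n\to\tau$, so $L\ge\tau$; and in fact $L>\tau$ would follow, but I only need $L<t^*$). Let $u_n^*\in\mathcal U_M$ be the optimal control for $(TP)_M^{\widetilde\tau_n}$, so $e^{t^*(M,\widetilde\tau_n)\Delta}y_0+e^{(t^*(M,\widetilde\tau_n)-\widetilde\tau_n)\Delta}\chi_\omega u_n^*\in B_r(0)$. Extract a weakly convergent subsequence $u_n^*\rightharpoonup \bar u$ in $L^2(\Omega)$ with $\|\bar u\|_{L^2}\le M$. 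Since $t^*(M,\widetilde\tau_n)\to L$ and $\widetilde\tau_n\to\tau$, passing to the weak limit in the representation formula (the semigroup operators $e^{s_n\Delta}\to e^{s\Delta}$ in norm on the relevant fixed times, and weak convergence is preserved under bounded operators converging strongly) yields $e^{L\Delta}y_0+e^{(L-\tau)\Delta}\chi_\omega\bar u\in B_r(0)$, i.e. $\bar u\in\mathcal U_M$ is admissible for $(TP)_M^\tau$ with terminal time $L$. By optimality this forces $L\ge t^*$, contradicting $L<t^*$. Therefore $\liminf_{\widetilde\tau\to\tau}t^*(M,\widetilde\tau)\ge t^*$. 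Combining the two inequalities proves continuity of $t^*(M,\cdot)$ at $\tau$.

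The main obstacle I anticipate is the upper-bound (semicontinuity from above) step: unlike the lower bound, it genuinely needs the dissipativity margin $e^{-\lambda_1\delta}r<r$ to push the optimal target point off the boundary $\partial B_r(0)$ before perturbing $\widetilde\tau$, because the optimal trajectory for $(TP)_M^\tau$ sits exactly on $\partial B_r(0)$ at $t^*$ (Theorem~\ref{impulse-4}), so a naive perturbation of $\widetilde\tau$ at time $t^*$ could push it out of $B_r(0)$. One must also be slightly careful that the perturbed evaluation time $t^*+\delta-\widetilde\tau$ stays bounded away from $0$ so the strong continuity estimate $\|(e^{a_n\Delta}-e^{a\Delta})v\|_{L^2}\to0$ applies; this is why I insert the fixed margin $\delta$ and only then send $\delta\to0$. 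Everything else is a routine combination of the representation formula, the semigroup contraction/continuity estimates, and the uniqueness/norm-saturation of Theorem~\ref{impulse-4}.
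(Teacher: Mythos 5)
Your proposal is correct and follows essentially the same route as the paper: the $\liminf$ inequality via weak compactness of the optimal controls for the perturbed problems and passage to the limit in the representation formula, and the $\limsup$ inequality via the dissipation margin $e^{-\lambda_1 \delta}$ gained over an extra time $\delta$, which absorbs the perturbation $\bigl(e^{(t^*+\delta-\widetilde{\tau})\Delta}-e^{(t^*+\delta-\tau)\Delta}\bigr)\chi_\omega u^*$. The only cosmetic differences are that the paper perturbs the impulse time at $t^*$ and then lets the solution decay over $[t^*,t^*+\delta]$ (rather than decaying first and perturbing at $t^*+\delta$), and phrases that step as a proof by contradiction.
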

\begin{proof}~We arbitrarily fix $\{\tau_n\}_{n\geq 1}\subseteq (0,+\infty)$ with $\tau_n\rightarrow \tau$.
By Lemma~\ref{2018-impulse-10}, we can assume that
\begin{equation}\label{proof-1}
\{e^{\tau_n \Delta} y_0+\chi_\omega u: u\in \mathcal{U}_M\}\cap B_r(0)=\emptyset\;\;\mbox{for each}
\;\;n\geq 1.
\end{equation}
This, along with (\ref{impulse-3}) and Theorem~\ref{impulse-4}, implies that
\begin{equation}\label{proof-2}
t^*(M,\tau)>\tau\;\;\mbox{and}\;\;t^*(M,\tau_n)>\tau_n\;\;\mbox{for each}\;\;n\geq 1.
\end{equation}
Moreover, by (\ref{Proof-2(1)}) in the proof of Lemma~\ref{lemma-1}, we see that
\begin{equation*}
0\leq t^*(M,\tau_n)\leq \frac{1}{\lambda_1}\mbox{ln}\frac{\|y_0\|_{L^2(\Omega)}}{r}
\;\;\mbox{for each}\;\;n\geq 1.
\end{equation*}
We aim to show that
\begin{equation}\label{proof-3}
\lim_{n\rightarrow +\infty} t^*(M,\tau_n)=t^*(M,\tau).
\end{equation}
The rest of the proof will be split into the following three steps.\\

Step 1. We claim that
\begin{equation}\label{proof-4}
t^*(M,\tau)\leq \liminf_{n\rightarrow +\infty} t^*(M,\tau_n)\triangleq \widetilde{t}.
\end{equation}

Without loss of generality,  we suppose that there is a subsequence of $\{n\}_{n\geq 1}$,
still denoted in the same way, so that
\begin{equation}\label{proof-5}
\lim_{n\rightarrow +\infty} t^*(M,\tau_n)=\widetilde{t}.
\end{equation}
This, together with the second inequality of (\ref{proof-2}), implies that
$\widetilde{t}\geq \tau$. For each $n\geq 1$, according to (\ref{proof-1}) and Theorem~\ref{impulse-4},
$(TP)_M^{\tau_n}$ has a unique optimal control $u_n^*\in \mathcal{U}_M$. Then
\begin{equation}\label{proof-6}
\begin{array}{lll}
&&y^{\tau_n}(t^*(M,\tau_n); y_0, u_n^*)
=e^{t^*(M,\tau_n)\Delta} y_0
+e^{(t^*(M,\tau_n)-\tau_n)\Delta} \chi_\omega u_n^*\in B_r(0),
\end{array}
\end{equation}
and there exists a subsequence of $\{n\}_{n\geq 1}$, still denoted by itself, and
a control $\widetilde{u}\in \mathcal{U}_M$, so that
\begin{equation}\label{proof-7}
u_n^*\rightarrow \widetilde{u}\;\;\mbox{weakly in}\;\;L^2(\Omega).
\end{equation}
Passing to the limit as $n\rightarrow +\infty$ in (\ref{proof-6}),
by (\ref{proof-5}) and (\ref{proof-7}), we see that
\begin{equation}\label{proof-7(1)}
y^{\tau_n}(t^*(M,\tau_n); y_0, u_n^*)\rightarrow y^\tau(\widetilde{t}; y_0, \widetilde{u})\;\;
\mbox{weakly in}\;\;L^2(\Omega)\;\;
\mbox{and}\;\;y^\tau(\widetilde{t}; y_0, \widetilde{u})\in B_r(0).
\end{equation}
Hence, $\widetilde{u}$ is an admissible control to the problem $(TP)_M^\tau$ and
$t^*(M,\tau)\leq \widetilde{t}$. Then (\ref{proof-4}) follows.\\

Step 2. We show that
\begin{equation}\label{proof-8}
t^*(M,\tau)\geq \limsup_{n\rightarrow +\infty} t^*(M,\tau_n)\triangleq \widehat{t}.
\end{equation}

To seek a contradiction, we would suppose that $\widehat{t}>t^*(M,\tau)$. Without loss of
generality, we assume that there exists a subsequence of $\{n\}_{n\geq 1}$, still
denoted in the same manner, so that
\begin{equation}\label{proof-9}
\lim_{n\rightarrow +\infty} t^*(M,\tau_n)=\widehat{t}.
\end{equation}
We now choose a positive constant $\delta\in \big(0,(\widehat{t}-t^*(M,\tau))/2\big)$.
According to (\ref{proof-9}), there is a positive integer $n_1(\delta)$ so that
\begin{equation}\label{proof-10}
t^*(M,\tau_n)>t^*(M,\tau)+\delta\;\;\mbox{for all}\;\;n\geq n_1(\delta).
\end{equation}
Let $u^*\in \mathcal{U}_M$ be the unique optimal control to the problem $(TP)_M^\tau$
(see Theorem~\ref{impulse-4}). It is clear that
\begin{equation}\label{proof-11}
\begin{array}{lll}
&&y^\tau(t^*(M,\tau); y_0, u^*)\\
&=&e^{t^*(M,\tau)\Delta} y_0+e^{(t^*(M,\tau)-\tau)\Delta}(\chi_\omega u^*)\in B_r(0).
\end{array}
\end{equation}
On one hand, since $\tau_n\rightarrow \tau<t^*(M,\tau)$ (see the first inequality of (\ref{proof-2})),
there is a positive integer $n_2(\delta)\geq n_1(\delta)$ so that for all $n\geq n_2(\delta)$,
\begin{equation*}
t^*(M,\tau)>\tau_n
\end{equation*}
and
\begin{equation}\label{proof-13}
\|e^{(t^*(M,\tau)-\tau_n)\Delta}(\chi_\omega u^*)-e^{(t^*(M,\tau)-\tau)\Delta}(\chi_\omega u^*)\|_{L^2(\Omega)}
\leq r(e^{\delta \lambda_1}-1).
\end{equation}
On the other hand, since
\begin{eqnarray*}
y^{\tau_n}(t^*(M,\tau); y_0, u^*)&=&e^{t^*(M,\tau)\Delta}y_0+e^{(t^*(M,\tau)-\tau_n)\Delta}(\chi_\omega u^*)\\
&=&\left[e^{t^*(M,\tau)\Delta}y_0+e^{(t^*(M,\tau)-\tau)\Delta}(\chi_\omega u^*)\right]\\
&&+\left[e^{(t^*(M,\tau)-\tau_n)\Delta} (\chi_\omega u^*)-e^{(t^*(M,\tau)-\tau)\Delta}(\chi_\omega u^*)\right],
\end{eqnarray*}
by (\ref{proof-11}) and (\ref{proof-13}), we have that
\begin{equation*}
\|y^{\tau_n}(t^*(M,\tau); y_0, u^*)\|_{L^2(\Omega)}\leq r e^{\delta \lambda_1}\;\;
\mbox{for all}\;\;n\geq n_2(\delta).
\end{equation*}
This, along with the decay of the energy for solutions to the heat equation, implies that
\begin{eqnarray*}
\|y^{\tau_n}(t^*(M,\tau)+\delta; y_0, u^*)\|_{L^2(\Omega)}&=&
\|e^{\delta\Delta} y^{\tau_n}(t^*(M,\tau); y_0, u^*)\|_{L^2(\Omega)}\\
&\leq& e^{-\delta \lambda_1} \|y^{\tau_n}(t^*(M,\tau); y_0, u^*)\|_{L^2(\Omega)}\\
&\leq&r\;\;\mbox{for all}\;\;n\geq n_2(\delta).
\end{eqnarray*}
It follows from the latter and the time optimality of $t^*(M,\tau_n)$
for the problem $(TP)_M^{\tau_n}$ that
\begin{equation*}
t^*(M,\tau_n)\leq t^*(M,\tau)+\delta\;\;\mbox{for all}\;\;n\geq n_2(\delta)\geq n_1(\delta),
\end{equation*}
which leads to a contradiction with (\ref{proof-10}). Hence, (\ref{proof-8}) holds.\\

Step 3. End of the proof.

According to (\ref{proof-4}) and (\ref{proof-8}), we arrive at (\ref{proof-3}).\\

Hence, we finish the proof of Proposition~\ref{impulse-9}.
\end{proof}

The next result is concerned with the monotonicity and the continuity of the function $t^*(\cdot,\tau)$,
where $\tau$ is fixed.
\begin{Proposition}\label{impulse-10} Let $M>0$ and $\tau\geq 0$ verify
(\ref{impulse-3}). Then the function $t^*(\cdot,\tau)$ is
strictly monotone decreasing near $M$ and is continuous at $M$.
\end{Proposition}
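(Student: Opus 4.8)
The plan is to prove two things separately: strict monotonicity of $t^*(\cdot,\tau)$ near $M$, and continuity at $M$; the monotonicity (plus boundedness) will then feed into continuity via a standard squeeze argument.

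\textbf{Strict monotonicity.} First I would note that since $\mathcal{U}_{M_1}\subseteq \mathcal{U}_{M_2}$ whenever $M_1\leq M_2$, the function $M\mapsto t^*(M,\tau)$ is nonincreasing wherever it is defined; by Lemma~\ref{2018-impulse-10} it is in fact defined (and $>\tau$, by Theorem~\ref{impulse-4}) for all $M'$ in a neighborhood $(M-\varepsilon_0, M+\varepsilon_0)$, since \eqref{impulse-3} persists there. To upgrade ``nonincreasing'' to ``strictly decreasing,'' I would argue by contradiction: suppose $t^*(M_1,\tau)=t^*(M_2,\tau)=:t^*$ for some $M_1<M_2$ in that neighborhood. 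Let $u_1^*$ be the optimal control for $(TP)_{M_1}^\tau$; by Theorem~\ref{impulse-4}, $\|u_1^*\|_{L^2(\Omega)}=M_1<M_2$. Then $u_1^*$ is an admissible control for $(TP)_{M_2}^\tau$ achieving time $t^*$, hence optimal for it as well, so by Theorem~\ref{impulse-4} applied to $(TP)_{M_2}^\tau$ we must have $\|u_1^*\|_{L^2(\Omega)}=M_2$, a contradiction. This shows $t^*(M_1,\tau)>t^*(M_2,\tau)$ strictly, i.e.\ $t^*(\cdot,\tau)$ is strictly monotone decreasing on $(M-\varepsilon_0, M+\varepsilon_0)$.

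\textbf{Continuity at $M$.} Fix a sequence $M_n\to M$; I want $t^*(M_n,\tau)\to t^*(M,\tau)$. For the $\liminf$ (lower semicontinuity), I would mimic Step 1 of the proof of Proposition~\ref{impulse-9}: extract a subsequence along which $t^*(M_n,\tau)\to\widetilde t\geq\tau$, take the optimal controls $u_n^*\in\mathcal{U}_{M_n}$, which are bounded in $L^2(\Omega)$ (since $M_n\to M$), extract a weakly convergent subsequence $u_n^*\rightharpoonup\widetilde u$ with $\|\widetilde u\|_{L^2(\Omega)}\leq M$, pass to the weak limit in $y^\tau(t^*(M_n,\tau);y_0,u_n^*)=e^{t^*(M_n,\tau)\Delta}y_0+e^{(t^*(M_n,\tau)-\tau)\Delta}\chi_\omega u_n^*\in B_r(0)$ using the strong continuity of the relevant semigroup operators in the time argument together with weak convergence of $u_n^*$, and conclude $y^\tau(\widetilde t;y_0,\widetilde u)\in B_r(0)$, hence $t^*(M,\tau)\leq\widetilde t$; thus $t^*(M,\tau)\leq\liminf_n t^*(M_n,\tau)$. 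For the $\limsup$, I would use monotonicity to reduce to a one-sided statement: for the subsequence with $M_n\geq M$ one has $t^*(M_n,\tau)\leq t^*(M,\tau)$ trivially, so only $M_n\uparrow M$ needs attention; there I would take the optimal control $u^*$ for $(TP)_M^\tau$, rescale it to $u_n\triangleq (M_n/M)u^*\in\mathcal{U}_{M_n}$ (legitimate since $\|u^*\|_{L^2(\Omega)}=M$ by Theorem~\ref{impulse-4}, so $\|u_n\|_{L^2(\Omega)}=M_n$), observe $u_n\to u^*$ strongly in $L^2(\Omega)$, and hence $y^\tau(t^*(M,\tau)+\delta;y_0,u_n)\to y^\tau(t^*(M,\tau)+\delta;y_0,u^*)$ in $L^2(\Omega)$ for any fixed small $\delta>0$; since $\|y^\tau(t^*(M,\tau);y_0,u^*)\|_{L^2(\Omega)}\leq r$ and the heat semigroup strictly contracts ($\|e^{\delta\Delta}\cdot\|\leq e^{-\delta\lambda_1}\|\cdot\|$), we get $\|y^\tau(t^*(M,\tau)+\delta;y_0,u^*)\|_{L^2(\Omega)}<r$, so for large $n$ also $\|y^\tau(t^*(M,\tau)+\delta;y_0,u_n)\|_{L^2(\Omega)}\leq r$, giving $t^*(M_n,\tau)\leq t^*(M,\tau)+\delta$; letting $n\to\infty$ then $\delta\to0$ yields $\limsup_n t^*(M_n,\tau)\leq t^*(M,\tau)$. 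Combining the two inequalities gives the claim.

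The main obstacle I anticipate is the $\limsup$/upper-semicontinuity direction when $M_n$ approaches $M$ from below: one cannot simply reuse the optimal control $u^*$ of $(TP)_M^\tau$ because it has norm exactly $M>M_n$, so it is not admissible for $(TP)_{M_n}^\tau$. The rescaling trick $u_n=(M_n/M)u^*$ handles this cleanly precisely because Theorem~\ref{impulse-4} guarantees $\|u^*\|_{L^2(\Omega)}=M$, and the strict energy decay of the heat semigroup provides the margin needed to absorb the perturbation caused by rescaling; these two ingredients — the saturation of the constraint and strict contractivity — are the crux. The rest (weak lower semicontinuity, boundedness of minimizers, persistence of \eqref{impulse-3}) is routine given Lemma~\ref{2018-impulse-10} and Theorem~\ref{impulse-4}.
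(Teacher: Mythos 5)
Your proposal is correct and follows essentially the same route as the paper: strict monotonicity via uniqueness of the optimal control plus the norm saturation $\|u^*\|_{L^2(\Omega)}=M$, lower semicontinuity via weak compactness of minimizing controls, and upper semicontinuity from below via the rescaling $u_n=(M_n/M)u^*$ combined with the strict decay $\|e^{\delta\Delta}\cdot\|_{L^2(\Omega)}\leq e^{-\delta\lambda_1}\|\cdot\|_{L^2(\Omega)}$. The only differences are organizational (you argue $\liminf$/$\limsup$ for arbitrary sequences rather than one-sided limits, and you replace the paper's explicit $\sigma_n$ estimate by strong convergence of $u_n\to u^*$ plus the contraction margin), which are immaterial.
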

\begin{proof}~Let $\varepsilon_0$ be the constant in Lemma~\ref{2018-impulse-10}. The proof
will be split into three steps as follows.\\

Step 1. We show that for each $\widetilde{\tau}\in [\max\{0,\tau-\varepsilon_0\},\tau+\varepsilon_0]$,
\begin{equation}\label{proof-18}
t^*(M_2,\widetilde{\tau})<t^*(M_1,\widetilde{\tau})\;\;
\mbox{for all}\;\;M-\varepsilon_0<M_1<M_2< M+\varepsilon_0.
\end{equation}

Indeed, by Lemma~\ref{2018-impulse-10} and Theorem~\ref{impulse-4}, for each $i=1, 2$, we have
that $t^*(M_i,\widetilde{\tau})>\widetilde{\tau}$, and $(TP)_M^{\tau_i}$ has a unique optimal control
$u_i^*\in \mathcal{U}_{M_i}$ so that
\begin{equation}\label{proof-19}
y^\tau(t^*(M_i,\widetilde{\tau}); y_0, u_i^*)\in B_r(0)\;\;\mbox{and}\;\;\|u_i^*\|_{L^2(\Omega)}=M_i.
\end{equation}
Since $M_1<M_2$, by the time optimality of $t^*(M_2,\widetilde{\tau})$ and (\ref{proof-19}), we see
that $t^*(M_2,\widetilde{\tau})\leq t^*(M_1,\widetilde{\tau})$. If
$t^*(M_2,\widetilde{\tau})=t^*(M_1,\widetilde{\tau})$, noting that $M_1<M_2$,
 by (\ref{proof-19}) again, we obtain that $u_1^*$ is also an optimal control to
the problem $(TP)_{M_2}^{\widetilde{\tau}}$. Since $u_2^*$ is the unique optimal control to
the problem $(TP)_{M_2}^{\widetilde{\tau}}$, we have that $u_1^*=u_2^*$ and
\begin{equation*}
M_1=\|u_1^*\|_{L^2(\Omega)}=\|u_2^*\|_{L^2(\Omega)}=M_2,
\end{equation*}
which leads to a contradiction. Hence, (\ref{proof-18}) holds.\\

Step 2. We prove that $t^*(\cdot,\tau)$ is right continuous at $M$.

For this purpose, let $\{M_n\}_{n\geq 1}\subseteq (0,+\infty)$ be a strictly monotone
decreasing sequence so that $M_n\rightarrow M$. By Lemma~\ref{2018-impulse-10}, we can assume that
\begin{equation}\label{proof-20}
\{e^{\tau \Delta} y_0+\chi_\omega u: u\in \mathcal{U}_{M_n}\}\cap B_r(0)=\emptyset\;\;\mbox{for each}\;\;n\geq 1.
\end{equation}
This, along with Theorem~\ref{impulse-4} and Step 1, yields that
\begin{equation*}
\tau<t^*(M_1,\tau)<t^*(M_2,\tau)<\cdots<t^*(M_n,\tau)<\cdots<t^*(M,\tau),
\end{equation*}
which indicates that
\begin{equation}\label{proof-21}
\tau<\widetilde{t}=\lim_{n\rightarrow +\infty} t^*(M_n,\tau)\leq t^*(M,\tau).
\end{equation}
According to (\ref{proof-20}) and Theorem~\ref{impulse-4}, we see that $(TP)_{M_n}^\tau$ has a
unique optimal control $u_n^*$. Moreover,
 $$\|u_n^*\|_{L^2(\Omega)}=M_n\leq M_1\;\;\text{for all}\;\; n\geq 1.$$
Hence, there exists a subsequence of $\{u_n^*\}_{n\geq 1}$, still denoted in the same way,
and a control $\widetilde{u}\in L^2(\Omega)$ so that
\begin{equation}\label{proof-22}
u_n^*\rightarrow \widetilde{u}\;\;\mbox{weakly in}\;\;L^2(\Omega)
\end{equation}
and
\begin{equation}\label{proof-23}
\|\widetilde{u}\|_{L^2(\Omega)}\leq \liminf_{n\rightarrow +\infty} \|u_n^*\|_{L^2(\Omega)}=M.
\end{equation}
Noting that
\begin{eqnarray*}
&&y^\tau(t^*(M_n,\tau); y_0, u_n^*)\\
&=&e^{t^*(M_n,\tau)\Delta} y_0+e^{(t^*(M_n,\tau)-\tau)\Delta}(\chi_\omega u_n^*)\in B_r(0),
\end{eqnarray*}
by (\ref{proof-21}) and (\ref{proof-22}),
we have that
\begin{equation}\label{proof-24}
y^{\tau}(t^*(M_n,\tau); y_0, u_n^*)\rightarrow y^\tau(\widetilde{t}; y_0, \widetilde{u})\;\;
\mbox{weakly in}\;\;L^2(\Omega)\;\;
\mbox{and}\;\;y^\tau(\widetilde{t}; y_0, \widetilde{u})\in B_r(0).
\end{equation}
It follows from the second relation of (\ref{proof-24}) and  (\ref{proof-23}) that
$\widetilde{u}$ is an admissible control to the problem $(TP)_M^\tau$. Hence, $t^*(M,\tau)\leq \widetilde{t}$.
This, along with (\ref{proof-21}), implies that
\begin{equation*}
\lim_{n\rightarrow +\infty} t^*(M_n,\tau)=t^*(M,\tau).
\end{equation*}
Thus, the desired result holds.\\

Step 3. We claim that $t^*(\cdot,\tau)$ is left continuous at $M$.

To this end, let $\{M_n\}_{n\geq 1}\subseteq (0,+\infty)$ be a strictly monotone increasing sequence so that
$M_n\rightarrow M$. By (\ref{impulse-3}), Theorem~\ref{impulse-4} and Step 1, we observe that
\begin{equation*}
t^*(M_1,\tau)>t^*(M_2,\tau)>\cdots>t^*(M_n,\tau)>\cdots>t^*(M,\tau)>\tau.
\end{equation*}
This implies that
\begin{equation}\label{proof-25}
\widehat{t}=\lim_{n\rightarrow +\infty} t^*(M_n,\tau)\geq t^*(M,\tau)>\tau.
\end{equation}
It suffices to show that
\begin{equation}\label{proof-26}
\widehat{t}=t^*(M,\tau).
\end{equation}
To seek a contradiction, we would suppose that
\begin{equation*}
\widehat{t}>t^*(M,\tau).
\end{equation*}
Let $\delta\in \left(0,\min\{(\widehat{t}-t^*(M,\tau))/2, \text{ln}2/\lambda_1\}\right)$.
According to (\ref{proof-25}), there is a positive integer $n_1(\delta)$ so that
\begin{equation}\label{proof-27}
t^*(M_n,\tau)>t^*(M,\tau)+\delta\;\;\mbox{for all}\;\;n\geq n_1(\delta).
\end{equation}
Set $\sigma_n\triangleq M_n/M$ for all $n\geq 1$. Obviously, there is a positive integer
$n_2(\delta)\geq n_1(\delta)$ so that
\begin{equation}\label{proof-28}
1\geq \sigma_n\geq 1-r(e^{\delta \lambda_1}-1)/\|y_0\|_{L^2(\Omega)}\;\;\mbox{for all}\;\;n\geq n_2(\delta).
\end{equation}
By (\ref{impulse-3}) and Theorem~\ref{impulse-4}, $(TP)_M^\tau$ has a unique optimal control
$u^*$ with $\|u^*\|_{L^2(\Omega)}=M$ so that
\begin{equation}\label{proof-29}
\begin{array}{lll}
&&y^\tau(t^*(M,\tau); y_0, u^*)\\
&=&e^{t^*(M,\tau)\Delta} y_0+e^{(t^*(M,\tau)-\tau)\Delta}(\chi_\omega u^*)\in B_r(0).
\end{array}
\end{equation}
Denote $u_n\triangleq \sigma_n u^*$ for all $n\geq n_2(\delta)$. It is obvious that
\begin{equation}\label{proof-30}
\|u_n\|_{L^2(\Omega)}=M_n\;\;\mbox{for all}\;\;n\geq n_2(\delta).
\end{equation}
Since $t^*(M,\tau)>\tau$ and
\begin{eqnarray*}
y^\tau(t^*(M,\tau); y_0, u_n)&=&e^{t^*(M,\tau)\Delta}y_0+e^{(t^*(M,\tau)-\tau)\Delta}(\chi_\omega u_n)\\
&=&\sigma_n\left(e^{t^*(M,\tau)\Delta}y_0+e^{(t^*(M,\tau)-\tau)\Delta}(\chi_\omega u^*)\right)
+(1-\sigma_n)e^{t^*(M,\tau)\Delta}y_0\\
&=&\sigma_n y^\tau(t^*(M,\tau); y_0, u^*)+(1-\sigma_n)e^{t^*(M,\tau)\Delta}y_0,
\end{eqnarray*}
by the decay of the energy for solutions to the heat equation,
we obtain that
\begin{eqnarray*}
&&\|y^\tau(t^*(M,\tau)+\delta; y_0, u_n)\|_{L^2(\Omega)}\\
&\leq&e^{-\lambda_1\delta}\|y^\tau(t^*(M,\tau); y_0, u_n)\|_{L^2(\Omega)}\\
&\leq&e^{-\lambda_1\delta}(\sigma_n \|y^\tau(t^*(M,\tau); y_0, u^*)\|_{L^2(\Omega)}
+(1-\sigma_n)e^{-\lambda_1 t^*(M,\tau)}\|y_0\|_{L^2(\Omega)}).
\end{eqnarray*}
This, along with (\ref{proof-29}) and (\ref{proof-28}), implies that
\begin{equation}\label{proof-31}
\begin{array}{lll}
&&\|y^\tau(t^*(M,\tau)+\delta; y_0, u_n)\|_{L^2(\Omega)}\\
&\leq&e^{-\lambda_1\delta}[\sigma_n r+(1-\sigma_n)\|y_0\|_{L^2(\Omega)}]\leq r\;\;\mbox{for all}\;\;n\geq n_2(\delta).
\end{array}
\end{equation}
It follows from (\ref{proof-31}) and (\ref{proof-30}) that $u_n$ is an admissible control to
the problem $(TP)_{M_n}^\tau$ for each $n\geq n_2(\delta)$. Hence, by the time optimality of $t^*(M_n,\tau)$,
we get that
\begin{equation*}
t^*(M_n,\tau)\leq t^*(M,\tau)+\delta\;\;\mbox{for all}\;\;n\geq n_2(\delta)\geq n_1(\delta).
\end{equation*}
This leads to a contradiction with (\ref{proof-27}). Thus, the desired claim is true.\\

In summary, by Steps 1-3, we finish the proof of Proposition~\ref{impulse-10}.
\end{proof}

Finally, we turn to the proof of Theorem~\ref{impulse-5}.

\begin{proof}[Proof of Theorem~\ref{impulse-5}]
Let $M>0$ and $\tau\geq 0$ verify (\ref{impulse-3}). Let
$\varepsilon_0\in (0, M)$ be defined as in Lemma~\ref{2018-impulse-10}. According to (\ref{impulse-3})
and Proposition~\ref{impulse-10}, $t^*(\cdot,\tau)$ is continuous at $M$. Then for any
$\varepsilon>0$, there is a constant $\delta_1(\varepsilon)\in (0,\varepsilon_0/2)$ so that
\begin{equation}\label{proof-32}
|t^*(\widetilde{M},\tau)-t^*(M,\tau)|\leq \varepsilon/2
\;\;\mbox{when}\;\;|\widetilde{M}-M|\leq \delta_1(\varepsilon).
\end{equation}
For the same $\varepsilon$ above, by Lemma~\ref{2018-impulse-10} and Proposition~\ref{impulse-9}
(where $M$ is chosen as $M+\delta_1(\varepsilon)$ and $M-\delta_1(\varepsilon)$, respectively), there is
a constant $\delta_2(\varepsilon)\in (0,\delta_1(\varepsilon))$ so that for all $\widetilde{\tau}\in
[\max\{0,\tau-\delta_2(\varepsilon)\},\tau+\delta_2(\varepsilon)]$,
\begin{equation}\label{proof-33}
\begin{array}{l}
|t^*(M+\delta_1(\varepsilon),\widetilde{\tau})-t^*(M+\delta_1(\varepsilon),\tau)|\\
+|t^*(M-\delta_1(\varepsilon),\widetilde{\tau})-t^*(M-\delta_1(\varepsilon),\tau)|
<\varepsilon/2.
\end{array}
\end{equation}
Hence, it follows from (\ref{proof-32}), (\ref{proof-33}) and (\ref{proof-18}) that as
$\big|\widetilde{M}-M\big|\leq \delta_1(\varepsilon)$ and $\widetilde{\tau}\in
[\max\{\tau-\delta_2(\varepsilon),0\},\tau+\delta_2(\varepsilon)]$,
\begin{eqnarray*}
&&\big|t^*\big(\widetilde{M},\widetilde{\tau}\big)-t^*(M,\tau)\big|\\
&\leq&\max\big\{|t^*(M+\delta_1(\varepsilon),\widetilde{\tau})-t^*(M,\tau)|,
|t^*(M-\delta_1(\varepsilon),\widetilde{\tau})-t^*(M,\tau)|\big\}\\
&\leq&\max\big\{|t^*(M+\delta_1(\varepsilon),\widetilde{\tau})-t^*(M+\delta_1(\varepsilon),\tau)|
+|t^*(M+\delta_1(\varepsilon),\tau)-t^*(M,\tau)|,\\
&&\;\;\;\;\;\;\;\;|t^*(M-\delta_1(\varepsilon),\widetilde{\tau})-t^*(M-\delta_1(\varepsilon),\tau)|
+|t^*(M-\delta_1(\varepsilon),\tau)-t^*(M,\tau)|\big\}\\
&\leq&\varepsilon.
\end{eqnarray*}
This implies that $t^*(\cdot,\cdot)$ is continuous at $(M,\tau)$.

We then show the convergence of optimal controls. Since (\ref{impulse-3}) holds and $(M_n,\tau_n)\rightarrow (M,\tau)$,
by Lemma~\ref{2018-impulse-10}, we can assume that
\begin{equation*}
\big\{e^{\tau_n \Delta} y_0+\chi_\omega u: u\in \mathcal{U}_{M_n}\big\}\cap B_r(0)=\emptyset\;\;
\mbox{for all}\;\;n\geq 1.
\end{equation*}
This, along with (\ref{impulse-3}) and Theorem~\ref{impulse-4}, implies that
$(TP)_{M_n}^{\tau_n}$ and $(TP)_M^\tau$ have unique optimal controls $u_n^*$ and $u^*$, respectively.
Moreover,
\begin{equation}\label{proof-34}
\|u_n^*\|_{L^2(\Omega)}=M_n,\;\;\|u^*\|_{L^2(\Omega)}=M
\end{equation}
and
\begin{equation}\label{proof-35}
\begin{array}{lll}
&&y^{\tau_n}(t^*(M_n,\tau_n); y_0, u_n^*)\\
&=&e^{t^*(M_n,\tau_n)\Delta}y_0+e^{(t^*(M_n,\tau_n)-\tau_n)\Delta}(\chi_\omega u_n^*)\in B_r(0).
\end{array}
\end{equation}
We arbitrarily take a subsequence $\{u_{n_k}^*\}_{k\geq 1}$ of $\{u_n^*\}_{n\geq 1}$.
Since $\|u_{n_k}^*\|_{L^2(\Omega)}=M_{n_k}\rightarrow M$, there exists a subsequence of $\{n_k\}_{k\geq 1}$,
still denoted in the same way, and a control $\widetilde{u}\in L^2(\Omega)$ so that
\begin{equation}\label{proof-36}
u_{n_k}^*\rightarrow \widetilde{u}\;\;\mbox{weakly in}\;\;L^2(\Omega)
\;\;
\mbox{and}\;\;\|\widetilde{u}\|_{L^2(\Omega)}\leq \liminf_{k\rightarrow +\infty}\|u_{n_k}^*\|_{L^2(\Omega)}=M.
\end{equation}
By (\ref{proof-35}), the first relation of (\ref{proof-36}) and the continuity of $t^*(\cdot,\cdot)$ at $(M,\tau)$,
using the similar arguments as those in deriving  (\ref{proof-7(1)}),
we see that $y^\tau(t^*(M,\tau); y_0, \widetilde{u})\in B_r(0)$. This, along with the second relation of (\ref{proof-36}),
implies that $\widetilde{u}$ is an optimal control to the problem $(TP)_M^\tau$. Since $u^*$ is the unique optimal
control to the problem $(TP)_M^\tau$, we have that $\widetilde{u}=u^*$ and
\begin{equation}\label{proof-37}
u_{n_k}^*\rightarrow u^*\;\;\mbox{weakly in}\;\;L^2(\Omega).
\end{equation}
Noting that $\|u_{n_k}^*\|_{L^2(\Omega)}\rightarrow \|u^*\|_{L^2(\Omega)}$, by (\ref{proof-37}),
we get that $u_{n_k}^*\rightarrow u^*$ strongly in $L^2(\Omega)$. \\

In summary, we finish the proof of Theorem~\ref{impulse-5}.
\end{proof}



\end{document}